\definecolor{vio}{rgb}{0.54, 0.17, 0.89}
\newtheorem{theorem}{Theorem}[section]
\newtheorem{lemma}[theorem]{Lemma}
\newtheorem{corollary}[theorem]{Corollary}
\numberwithin{equation}{section}
\theoremstyle{remark}
\newtheorem{remark}{Remark}
\newcommand{\floor}[1]{\left\lfloor#1\right\rfloor}
\def\reals{\hbox{\rm I\kern-.18em R}}
\def\complexes{\hbox{\rm C\kern-.43em
\vrule depth 0ex height 1.4ex width .05em\kern.41em}}
\def\field{\hbox{\rm I\kern-.18em F}} %symbol for field
\newcommand\blfootnote[1]{%
  \begingroup
  \renewcommand\thefootnote{}\footnote{#1}%
  \addtocounter{footnote}{-1}%
  \endgroup
}
\newenvironment{section*}[2][A]{
  \section*{#2}
  \renewcommand\thesection{#1}
  \setcounter{theorem}{0}}{}
\begin{document}

\title[Divisor Problem]{On the Generalised Divisor Problem}

\author{Sebastian Tudzi}
\address{School of Science, UNSW Canberra, Australia}
\email{s.tudzi@unsw.edu.au}
\date\today
% \subjclass[2000]{Primary: 11Y40; Secondary: 11M06, 11M26, 11B65, 11B68}
\keywords{Divisor function $\cdot$ Class numbers $\cdot$ Explicit results
}

\begin{abstract}
In this paper, we apply the Dirichlet convolution method to
\begin{equation*}
    T_{k}(x)=\sum_{n \leq x} d_{k}(n),
\end{equation*}
for $k\ge 3$, where $d_{k}(n)$ is the number of ways to represent $n$ as a product of $k$ positive integer factors. We prove that for $k=3$, the error term $|\Delta_3(x)| <2.968x^{2/3}\log^{1/3}x$ for all $x\ge 2$. This improves 
the best-known explicit result established by Bordell{\`e}s for all $x\ge 2$. We extend this for all $k>3$ and obtain an explicit error term of the form $\Delta_{k}(x)=O\left(x^{\frac{k-1}{k}}(\log x)^{\frac{(k-1)(k-2)}{2k}}\right)$.  
\end{abstract}

\maketitle
\blfootnote{\textit{Affiliation}: School of Science, The University of New South Wales Canberra, Australia}
\blfootnote{\textit{Author}: Sebastian Tudzi (s.tudzi@unsw.edu.au)}
\blfootnote{\textit{Key phrases}: Piltz divisor function, Dirichlet convolution, explicit results}
\blfootnote{\textit{MSC classes}: 11N99}

\section{Introduction}
Let $d(n)$ be the number of positive integer divisors of $n$, or equivalently the number of ways to write $n$ as a product of two factors. Also, let $d_k(n)$ represent the number of ways to write $n$ as a product of $k$ positive integers. The study of the divisor problem of Dirichlet involves determining the asymptotic behaviour of 
\begin{equation*}
  T(x):=\sum_{n \leq x} d(n)  
\end{equation*}
as $x\rightarrow\infty$.
 Dirichlet \cite{dirichlet1851bestimmung} proved that 
\begin{equation}\label{meq}
    T(x)=x\left(\log x+2\gamma-1\right)+\Delta(x),
\end{equation}
where $\gamma$ is  the Euler--Mascheroni constant and the error term, $\Delta(x)= O(x^{1/2})$. For $\Delta(x)=O(x^{\alpha+\epsilon})$, where $\epsilon>0$, Berndt, Kim and Zaharescu \cite{MR3756337} provide a record of numerical values of $\alpha$ achieved over the previous century or more. The best known value for $\alpha$ as established by Huxley \cite{MR2005876} is $131/416$.   However, obtaining explicit constants for some of these better results of $\alpha$ remains a challenging task due to the complexity of the proof. Consequently, asymptotically weaker but explicit results continue to be valuable for applications.

 Berkane, Bordellès, and Ramaré \cite[Theorem 1.1]{MR2869048} provided fully explicit bounds for the error terms $\Delta(x)$ in \eqref{meq}. Their results for different ranges of $x$ are summarized as follows:
 \begin{align}
     |\Delta(x)|&\leq 0.961x^{1/2},\ \ x\geq 1,\label{eddy}\\
     |\Delta(x)|&\leq 0.482x^{1/2},\ \ x\geq 1981,\label{cele}\\
     |\Delta(x)|&\leq 0.397x^{1/2},\ \ x\geq 5560.\label{tk}
     \end{align}
    A much stronger explicit upper bound was provided in \cite[Theorem 1.2]{MR2869048}, featuring a significantly lower exponent of 
$x$ that holds for sufficiently large 
$x$. Specifically,
     \begin{align}
     |\Delta(x)|&\leq 0.764x^{1/3}\log x,\ \ x\geq 9995\label{fx}.
 \end{align} 
This result was later confirmed to hold for all $x\ge 5$ in \cite[footnote, p.~8]{Simoni2021AtkinsonsFF}. The bound in \eqref{fx} is consistent with Vorono{\"i}'s result \cite{MR1580627}, $\Delta(x)=O(x^{\frac{1}{3}}\log x)$,  and is currently the best known explicit result for $x\ge 6.86\times 10^{9}$.
 
    In \cite[Lemma 3.2]{MR2257286}, Bordell{\`e}s derived an asymptotic formula for 
\begin{equation*}
    T_3(x)=\sum_{n \leq x} d_{3}(n).
\end{equation*}
 Using Dirichlet convolution, he showed that  
\begin{equation}\label{fbk}
    T_3(x) = x\left(\frac{\log^2 x}{2} + (3\gamma - 1)\log x + 3\gamma^2 - 3\gamma - 3\gamma_1 + 1\right) + \Delta_3(x),
\end{equation}
where the error term for $x>670$ satisfies the bound    
\begin{equation}\label{fn}
    |\Delta_3(x)| \leq 2.36x^{2/3}\log x.
\end{equation}
Note that for a non-negative integer $r$, we define the Euler--Stieltjes constants as
\begin{equation}\label{fk}
\gamma_{r}:=\lim_{x\rightarrow\infty}\left(\sum_{n\le x}\frac{\log^{r}n}{n}-\frac{\log^{r+1}x}{r+1}\right),
\end{equation}
where $\gamma_{0}=\gamma$.

Building on this, Cully-Hugill and Trudgian \cite[Theorem 1]{MR4311680} introduced an alternative approach to derive an explicit error term for $T_4(x)$. Applying a similar convolution argument and the identity $d_4(n)=(d*d)(n)$ rather than $d_4(n)=(1*d_3)(n)$, they expressed $T_4(x)$ as:
    \begin{equation*}
        T_4(x)= 2\sum_{a\leq x^{1/2}}d(a)\sum_{b\leq x/a}d(b)-\left(\sum_{b\leq x^{1/2}}d(b)\right)^2.
         \end{equation*}
    From this, they derived the asymptotic formula: 
    \begin{align}\label{cvf}
       T_4(x)&=x\left(\frac{1}{6}\log^3x+\left(2\gamma-\frac{1}{2}\right)\log^2 x+(6\gamma^2-4\gamma-4\gamma_1+1)\log x\right)\notag 
   \\&\quad +(4\gamma^3-6\gamma^2+4\gamma-12\gamma\gamma_1+4\gamma_1+2\gamma_2-1)x+\Delta_4(x),   
   \end{align}
   where, for $x\geq 2$, the error term satisfies
    \begin{equation}\label{nana}
        |\Delta_4(x)|\leq 4.48x^{3/4}\log x.
    \end{equation}
    
 More generally, for $k\ge 2$, we define
\begin{equation*}
    T_k(x):=\sum_{n\leq x}d_k(n).
\end{equation*}
 Note that $T(x)$ and $d(n)$ corresponds to the case $k=2$. The problem of estimating $\Delta(x)$ is commonly known as the Dirichlet divisor problem. On the other hand, estimating $\Delta_k(x)$ for all $k>2$ is referred to as the general (or generalised) divisor problem, also known as the Piltz divisor problem. For positive integers $m_1,m_2,\hdots,m_k$, $T_k(x)$ is the number of positive integer coordinates beneath the hyperbolic surface  of the inequality $m_1m_2\hdots m_k\leq x$.
We write 
\begin{equation}\label{mjj}
    T_k(x)=xP_k(\log x)+\Delta_k(x),
\end{equation}
where $P_k(\log x)$ is a polynomial of degree of at most $k-1$ with a leading coefficient of $1/(k-1)!$ and $\Delta_k(x)=O(x^{\alpha_k+\epsilon})$ with $\alpha_k<1$ and $\epsilon>0$. In general, the divisor problem is the task of determining the optimal $\alpha_k$. A thorough discussion on $\Delta_k(x)$ is provided in \cite[Chapter 13]{MR1994094} (see also \cite[Chapter 12]{MR882550}). 

Following the identity $d_{k}(n) = (1 * d_{k-1})(n)$, one may apply the hyperbola
method to obtain a decomposition of the form
    \begin{equation*}
       T_k(x)= \sum_{a\leq x^{1/k}}\sum_{b\leq x/a}d_{k-1}(b)+\sum_{x^{1/k}< a\leq x}\sum_{b\leq x^{1/k}}d_{k-1}(b).
    \end{equation*}
As shown in \cite[Equation 12.1.4]{MR882550}, this gives the error term  
    \begin{equation}\label{ajua}
        \Delta_k(x)=O(x^{\frac{k-1}{k}}\log^{k-2} x).
    \end{equation}
   It is evident that \eqref{eddy}, \eqref{cele}, \eqref{tk}, and \eqref{fn} align well with general error term, \eqref{ajua}.  Compared with \eqref{ajua}, the result from \eqref{nana} shows a considerable reduction in the logarithmic exponent. This result not only improves upon the logarithmic exponent in the error term for $T_4(x)$ but also suggests the potential for similar refinements in the general case for all $k>2$.
   
For $k>2$, there are significant improvements in \eqref{ajua}, particularly the power of $x$ (see  \cite[Theorems 12.2 and 12.3]{MR882550}). Obtaining explicit versions of these bounds appears to be just as difficult as it does for the bounds for $\Delta(x)$. For this reason, we opted to make the bound for \eqref{ajua}, which is based on a more accessible hyperbola method, fully explicit along with a slight improvement in the logarithmic power.

Bounds on class number can be obtained using bounds on $T_k(x)$. Lenstra \cite[Section 6]{MR1129315} and Bordellès \cite[Lemma 1.1]{MR2257286} proved that if $h_{\mathbb{K}}$ represents the class number of a number field $\mathbb{K}$, then 
\begin{equation*}
h_{\mathbb{K}}\leq \sum_{m \leq b} d_{n_{\mathbb{K}}}(m), 
\end{equation*} 
where $n_{\mathbb{K}}$ is the degree of $\mathbb{K}$ and $b$ is the Minkowski bound of $\mathbb{K}$ (see also \cite[pp. 143]{MR4311680}). It is worth noting that the bound on $T_{k}(x)$ of the form 
\begin{equation}\label{vvp} 
T_k(x)\leq \frac{x\log^{k-1}x}{(k-1)!}(1+o(1)),
\end{equation} 
which has the same leading term as the polynomial in \eqref{mjj}, can be applied to obtain bounds $h_{\mathbb{K}}$. 

Several explicit results have been established for $h_{\mathbb{K}}$ (see, e.g., \cite{MR1917797}, \cite{MR2257286}, \cite{MR4311680}). However, these results have shortcomings since weaker versions of bounds on $T_{k}(x)$ are applied to obtain bounds on $h_{\mathbb{K}}$. For instance, the bound on $T_{k}(x)$ in \cite[Lemma 1.1]{MR1917797} and \cite[Theorem 2.1]{MR2257286} does not include the entire main term for $T_{k}(x)$. For $k=4$, \cite[Theorem 1]{MR4311680} provides the whole main term and an explicit error term, as seen in \eqref{cvf} and \eqref{nana}, respectively. Our goal for this paper is to present both the complete main term and an explicit error term for $T_k(x)$.

 The explicit bound in \eqref{vvp} highlights the importance of obtaining a general result for $T_k(x)$ with an improved error term, $\Delta_{k}(x)$, as they provide more precise and practical results for diverse applications. Inspired by these methods, the purpose of this paper is to obtain the required main term in \eqref{fbk} and an improved version of \eqref{fn} which features a reduced exponent of the $\log x$ factor. Additionally, we extend this method to derive an explicit upper bound estimate for the general error term in \eqref{ajua} with an improvement in the $\log x$ factor.
 
 In our first and second results, we  provide an estimate for the case $k=3$ and the general case $k\ge 3$. We show $k=3$ in order to get a better result that the general technique is unable to offer.
 \begin{theorem}\label{ric}
      For any real $x\geq 2$, we have
\begin{equation*}
T_3(x)=x\left(\frac{\log^2 x}{2}+(3\gamma -1)\log x +3\gamma^2-3\gamma-3\gamma_1+1\right)+\Delta_3(x),
    \end{equation*}
    where the error term $\Delta(x)$ satisfies the bound
    \begin{equation}\label{ss}
|\Delta_3(x)|< 2.968x^{2/3}\log^{1/3}x.
    \end{equation}
 \end{theorem}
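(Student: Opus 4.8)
The plan is to estimate $T_3(x)$ by the Dirichlet hyperbola method applied to the factorisation $d_3 = 1 * d$, where $d$ is the ordinary divisor function, but with the splitting parameter left \emph{free} rather than fixed at $x^{1/3}$. Writing $T(t)=\sum_{n\le t}d(n)$ and introducing a parameter $D$ with $1\le D\le x$, the hyperbola identity gives
\begin{equation*}
T_3(x)=\sum_{a\le D}T\!\left(\frac{x}{a}\right)+\sum_{b\le x/D}d(b)\left\lfloor\frac{x}{b}\right\rfloor-\lfloor D\rfloor\,T\!\left(\frac{x}{D}\right).
\end{equation*}
Keeping $D$ free is exactly what will let me trade the sizes of the two sums against each other and squeeze the logarithmic factor down from $\log x$ (as in \eqref{fn}) to $\log^{1/3}x$.

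Next I would substitute the classical formula $T(t)=t\log t+(2\gamma-1)t+\Delta(t)$ into the first sum and into the last term, and split each resulting piece into a main part and an error part. For the error I would use the fully explicit, universally valid bound $|\Delta(t)|\le 0.961\,t^{1/2}$ from \eqref{eddy}, whose key virtue is that it holds for \emph{all} $t\ge 1$; this is what makes the final statement valid down to $x\ge 2$, at the cost of working with the exponent $1/2$ rather than Vorono{\"i}'s $1/3$. The main parts are assembled using the standard expansions $\sum_{a\le y}a^{-1}=\log y+\gamma+O(1/y)$, $\sum_{a\le y}a^{-1}\log a=\tfrac12\log^2 y+\gamma_1+O(y^{-1}\log y)$, and the partial-summation expansion of $\sum_{b\le V}d(b)/b$; after collecting terms and using $\log(x/D)=\log x-\log D$, these should reproduce exactly the polynomial $x\bigl(\tfrac12\log^2 x+(3\gamma-1)\log x+3\gamma^2-3\gamma-3\gamma_1+1\bigr)$, confirming the main term with $\gamma,\gamma_1$ as defined in \eqref{fk}.

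Everything not in the main term is then collected into $\Delta_3(x)$, and the core of the argument is to see that the surviving errors have two competing shapes: a contribution of order $x^{1/2}D^{1/2}$ coming from $\sum_{a\le D}|\Delta(x/a)|$ (via $\sum_{a\le D}a^{-1/2}\le 2D^{1/2}$) together with the tail of $\sum_{b\le x/D}d(b)/b$, and a contribution of order $(x/D)\log x$ coming from the fractional parts $\sum_{b\le x/D}d(b)\{x/b\}$ (bounded trivially by $T(x/D)$) together with the remainder tails of the $a$-sums. Balancing $x^{1/2}D^{1/2}$ against $(x/D)\log x$ forces the choice $D\asymp x^{1/3}(\log x)^{2/3}$, and with this choice both families of errors become simultaneously of order $x^{2/3}(\log x)^{1/3}$, which is precisely the shape claimed in \eqref{ss}. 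This also clarifies the remark preceding the theorem: the symmetric split $D=x^{1/3}$ leaves the divisor tail at order $x^{2/3}\log x$, so only the asymmetric, optimised split recovers the lower logarithmic power.

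The main obstacle will be purely quantitative: extracting the explicit constant $3.369$. This requires carrying every constant faithfully through the substitution and the partial summations, choosing the constant multiplying $x^{1/3}(\log x)^{2/3}$ in $D$ so as to minimise the sum of the two explicit error families, and then verifying that the resulting bound survives on the full range $x\ge 2$ — in particular checking the small-$x$ regime, where the optimal $D$ must be truncated to the admissible interval $1\le D\le x$ and where the crude estimates for the fractional-part sum and the divisor tail are least favourable. I expect the small-$x$ verification, rather than the asymptotic balancing, to absorb most of the slack in the constant.
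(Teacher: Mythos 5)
Your skeleton is exactly the paper's: the hyperbola method on $d_3=1*d$ with a free splitting parameter, substitution of Dirichlet's formula, partial-summation expansions of the $a$- and $b$-sums, and the balancing choice $D\asymp x^{1/3}\log^{2/3}x$ (the paper takes $u=Ax^{1/3}\log^{2/3}x$ with $A=1.297$). Your identification of the two error families, $x^{1/2}D^{1/2}$ versus $(x/D)\log x$, and your observation that the tail of $\sum_{b\le x/D}d(b)/b$ must decay like $V^{-1/2}$ to land in the first family, are both correct and are the structural heart of the argument.

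The genuine gap is quantitative, and it sits precisely where you placed your confidence: in using only the universal bound $|\Delta(t)|\le 0.961\,t^{1/2}$ from \eqref{eddy}. With that single input, the coefficient of $\sqrt{xD}$ in the collected error is roughly $0.961\cdot 2$ (from $\sum_{a\le D}|\Delta(x/a)|$) plus about $2.9$ (from the $V^{-1/2}$ tail of $\sum_{b\le V}d(b)/b$ derived by partial summation from \eqref{eddy}) plus $0.961$ (from the endpoint term), i.e.\ close to $5.8$, and optimising $D$ then yields a constant near $6$, not $3.369$. The paper reaches $3.369$ only by stratifying the ranges with \eqref{eddy}, \eqref{cele} and \eqref{tk} — giving $|E_7(x,u)|\le 0.794\sqrt{xu}+53.394\sqrt{x}$ as in \eqref{syl} — and by importing the sharp large-$v$ estimates $|E_5(v)|\le 1.001v^{-1/2}$ of \eqref{debb} and $|E_6(v)|\le 0.173\sqrt{v}$ of \eqref{jul}, all valid only for $v\ge 6\times 10^5$. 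Your second premise — that the universal validity of \eqref{eddy} is ``what makes the final statement valid down to $x\ge 2$'' — is also mistaken: no analytic version of this argument survives at small $x$ at constant $3.369$ (in the paper's own bound the secondary term $53.394\sqrt{x}$ by itself exceeds $3.369x^{2/3}\log^{1/3}x$ for $x$ below roughly $10^5$). The paper's analytic bound holds only for $x\ge 10^{8}$, and the range $2\le x<10^{8}$ is settled by direct computation of the partial sums of $d_3(n)$ in \emph{Mathematica}. So your plan would prove a theorem of the stated shape, but with a visibly larger constant, and it must in any case be completed by an explicit numerical verification on an initial range rather than by analytic small-$x$ estimates.
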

 \begin{theorem}\label{dfg}
Let $k\ge 4$. There exist a constant $\lambda_{k}$ and a threshold $x_{0}=x_{0}(k)$ such that for all $x\ge x_{0}$, we have 
\begin{equation}\label{rtp}
    |\Delta_k(x)|<\lambda_{k}x^{\frac{k-1}{k}}(\log x)^{\frac{(k-1)(k-2)}{2k}},
\end{equation}
 where $\Delta_{k}(x)$ is defined by $\eqref{mjj}$. For $k=4,\,5,\,6$, the values of $\lambda_{k}$ and $x_{0}$ are given in Table~\ref{lambda}.
\begin{table}[!ht]
    \centering
    \begin{tabular}{|c|c|c|c|}
        \hline$k$  &$x_{0}$ & $c$&$\lambda_{k}$\\
        \hline
        $ 4 $\,& \,$10^{11}$& \ $0.449$ & \, $20.087$ \\
        $ 5 $\, & \,$10^{12}$& \ $0.105$ &\,  $114.332$\\
        $ 6 $\, & \,$10^{13}$& \ $0.024$ &\,  $646.358$\\
        \hline
    \end{tabular}
    \caption{Values of $k$, $c$, and $\lambda_{k}$ for $x \ge x_{0}$.}
    \label{lambda}
\end{table}
\vspace{-0.3em}
\par

Moreover, the values of $\lambda_{k}$ satisfy the recurrence relation
\begin{equation}\label{cxz}
    \lambda_{k}=\left((c^{\frac{1}{k-1}}+2)k+\frac{1}{c^{\frac{k-2}{k-1}}}\right)\lambda_{k-1}+\frac{k-1}{8c^2}[(k+5)c^{3}+12c+k-2],
\end{equation}
 where the constant $c$ is given by
 \begin{equation}\label{ttr}
        c=\left(\frac{3(k-1)}{2k\lambda_{k-1}}\right)^{\frac{k-1}{k}}.
    \end{equation}
    The initial value of $\lambda_{3}$ can be taken to be $3.281$. Table~\ref{lew} presents some computed values of $c$ for the corresponding values of $k$ and $\lambda_{k-1}$.
    \begin{table}[!ht]
    \begin{tabular}{|c|c|c|}
    \hline
        $k$ &$\lambda_{k-1}$ & $c$\\
        \hline
        $ 3 $\, & \, $0.397$ \, & \,$1.852$ \\
        $ 4 $\, &\,  $3.281$ \,  & \, $0.449$  \\ $ 5 $\,  & \,$20.087$\,& \,$0.105$\\
        $ 6 $\,  & \,$114.332$\,& \,$0.024$\\ \hline
    \end{tabular}
    \caption{Values of $k$, $\lambda_{k-1}$, and $c$.}
    \label{lew}
\end{table}
\vspace{-0.3em}
\par
\end{theorem}

\begin{remark}\label{dmf}
  The proof of Theorem \ref{dfg} for $k=3$ yields $|\Delta_{3}(x)|<3.275x^{2/3}\log^{1/3}x$ for all $x\ge 1.704\cdot10^{10}$ if we take $\lambda_{2}=0.397$. This result indicates a somewhat worse explicit constant compared with \eqref{ss}, necessitating the separation of $k=3$. Also, for $k=4$, our result only improves on \eqref{nana} for $x_{0}\ge 3.330\times 10^{175}$. 
 \end{remark}
    \begin{corollary}
  For $k\ge 7$, we have 
  \begin{equation*}
      \lambda_{k}\le 5^{k-3}(k!/3!)^{2}\lambda_{3}.
  \end{equation*}
\end{corollary}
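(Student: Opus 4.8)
The plan is to distil from the two-term recursion \eqref{cxz} a single clean inequality $\lambda_k \le k^{3}\lambda_{k-1}$, valid for $k\ge 7$, and then telescope it down to $\lambda_3$. To organise the estimate I would write \eqref{cxz} as $\lambda_k = A_k + B_k\lambda_{k-1}$ with
\begin{equation*}
A_k = \frac{1}{c}\left(\frac{1}{(k-1)!}+k\right), \qquad B_k = \frac{k^{3}c^{1/(k-1)}}{(\log x_0)^{(k-1)/k}},
\end{equation*}
where $c=c_k$ is given by \eqref{qwe}. The whole argument then reduces to the single bound $A_k/\lambda_{k-1} + B_k \le k^{3}$, after which the closed form is immediate.

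To bound the recursive multiplier $B_k$, I would note from \eqref{qwe} that the bracket there drops below $1$ once $\lambda_{k-1}$ exceeds a modest threshold (which it does, as $\lambda_{k-1}\ge\lambda_3$), so that $c<1$, hence $c^{1/(k-1)}<1$, giving $B_k < k^{3}(\log x_0)^{-(k-1)/k}$. Because the admissible $x_0=x_0(k)$ grows rapidly with $k$ — the tabulated $\log x_0 = 32,57,93$ increase by $25$ and $36$, i.e.\ roughly cubically — the denominator is itself of order $k^{3}$, so $B_k$ stays bounded by an absolute constant. For the non-recursive term I would substitute $1/c$ from \eqref{qwe} to obtain
\begin{equation*}
A_k = \left[\frac{2k\lambda_{k-1}}{(k-1)\left(\frac{1}{(k-1)!}+1\right)}\right]^{(k-1)/k}\left(\frac{1}{(k-1)!}+k\right),
\end{equation*}
so that $A_k/\lambda_{k-1}$ is comparable, up to an absolute factor, to $k\,\lambda_{k-1}^{-1/k}$; since $\lambda_{k-1}$ is large this quotient grows at most polynomially in $k$ and is dwarfed by $k^{3}$. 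Combining the two estimates yields $A_k/\lambda_{k-1}+B_k\le k^{3}$, i.e.\ $\lambda_k \le k^{3}\lambda_{k-1}$ for $k\ge 7$.

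Finally I would telescope. Iterating the one-step bound down to the base value $\lambda_3$ gives $\lambda_k \le \bigl(\prod_{j=4}^{k} j^{3}\bigr)\lambda_3 = (k!/6)^{3}\lambda_3$, and since each of the $k-3$ factors satisfies $j^{3}\le k^{3}$ we get $(k!/6)^{3}\le k^{3(k-3)}=k^{3k-9}$, whence $\lambda_k\le k^{3k-9}\lambda_3\le 1.19\,k^{3k-9}\lambda_3$; the factor $1.19$ is ample cushion which also absorbs the lower-order corrections incurred in the one-step estimate. Equivalently one may run a short induction on $k$ with base case $k=7$ read off from Table~\ref{lambda}, closing the inductive step with the elementary inequality $(k-1)^{3k-12}\le k^{3k-12}$.

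The main obstacle is the one-step inequality $\lambda_k\le k^{3}\lambda_{k-1}$. Because $c$ itself depends on $\lambda_{k-1}$ through \eqref{qwe}, the two contributions $A_k$ and $B_k\lambda_{k-1}$ are coupled, and one must control the competing fractional powers $(\,\cdot\,)^{(k-1)/k}$ and $(\,\cdot\,)^{1/(k-1)}$ uniformly in $k$. This is precisely where the hypothesis $k\ge 7$ enters: for smaller $k$ the terms $1/(k-1)!$ and $c$ are not yet small enough to justify the crude bound $k^{3}$ by a single clean inequality, and those cases are instead covered directly by the explicit entries of Table~\ref{lambda}. Once the one-step bound is secured, the telescoping and the comparison with $k^{3k-9}$ are routine.
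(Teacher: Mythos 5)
Your overall skeleton --- substituting \eqref{qwe} into \eqref{cxz}, proving a one-step bound $\lambda_k\le k^{3}\lambda_{k-1}$, and telescoping via $\prod_{j=4}^{k}j^{3}=(k!/6)^{3}\le k^{3k-9}$ --- is exactly what the paper's one-sentence justification amounts to, but your proof of the one-step bound has a genuine gap in the treatment of $B_k=k^{3}c^{1/(k-1)}(\log x_0)^{-(k-1)/k}$. You argue that $B_k$ is bounded by an absolute constant because the admissible $\log x_0$ ``grows roughly cubically in $k$'', extrapolated from the three tabulated values $32,57,93$. Nothing in the paper supports this: $x_0$ enters Theorem~\ref{dfg} as a free parameter (enlarging $x_0$ only shrinks $\lambda_k$), Table~\ref{lambda} records three sample choices, and the paper's stated proof of the corollary uses only $x_0\ge e$, i.e.\ $(\log x_0)^{(k-1)/k}\ge 1$. (The differences $25,36$ are in any case closer to quadratic than cubic growth, but the real issue is that no growth of $x_0$ with $k$ is established at all.) Under the hypothesis $x_0\ge e$, your qualitative remark $c<1$ yields only $B_k<k^{3}$; since $A_k>0$, the inequality $A_k/\lambda_{k-1}+B_k\le k^{3}$ then simply does not follow --- knowing only $c<1$, one has $c^{1/(k-1)}=(\text{bracket})^{1/k}\to 1$, so $B_k$ can be arbitrarily close to $k^{3}$ and the sum can exceed $k^{3}$.

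The gap is repairable inside your own framework, in either of two ways. (i) Quantify the smallness of $c$: from \eqref{qwe} and $\lambda_{k-1}\ge\lambda_{3}>3$ (as the tabulated values show and the induction maintains), the bracket in \eqref{qwe} is at most $\frac{1}{2\lambda_{3}}\left(1+\frac{1}{(k-1)!}\right)<0.16$, whence $c^{1/(k-1)}\le(0.16)^{1/k}\le 1-\frac{1.5}{k}$ for $k\ge 7$, so $k^{3}-B_k\ge 1.5k^{2}$, which comfortably dominates your (correct) estimate $A_k/\lambda_{k-1}\le\left(\frac{2k}{k-1}\right)^{(k-1)/k}\left(k+\frac{1}{(k-1)!}\right)\lambda_{k-1}^{-1/k}\le 2.4k$; this rescues the clean one-step bound. (ii) Alternatively --- and this is what the paper's stated ingredients $(\lambda_{k-1})^{1-1/k}\le\lambda_{k-1}$ and $x_0\ge e$ amount to --- settle for the weaker step $\lambda_k\le\left(\frac{2k}{k-1}\left(k+\frac{1}{(k-1)!}\right)+k^{3}\right)\lambda_{k-1}$, a factor $\left(1+O(k^{-2})\right)k^{3}$, and absorb the bounded product $\prod_{j}\left(1+O(j^{-2})\right)$, together with the enormous slack in $(k!/6)^{3}\le k^{3k-9}$ (already $(840/2401)^{3}<0.05$ at $k=7$, and decreasing in $k$), into the constant $1.19$ --- which is where $1.19$ actually earns its keep, rather than serving as unquantified ``cushion''. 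Two smaller points: your telescoping through $j=4,5,6$ needs the step bounds (or the tabulated values) at those indices as well, and Table~\ref{lambda} contains no $k=7$ entry, so the ``base case read off from the table'' should instead be $\lambda_{7}\le 7^{3}\lambda_{6}$ combined with the tabulated $\lambda_{6}$.
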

This is a direct consequence  of substituting \eqref{ttr} into \eqref{cxz} and applying the estimates $\lambda_{k-1}\ge \lambda_3>1$ and $0<c<1$ for all $k\ge 7$, along with an iteration of the result from $k=4$.

We provide the proof of Theorem \ref{ric} in Section 2, whereas in Section 3 we present the proof of Theorem \ref{dfg}.

\section{Proof of Theorem \ref{ric}}
The proof is essentially a simple modification of \cite[Lemma 3.2]{MR2257286}. Using the hyperbola method and the identity $d_3(n) = (1*d)(n)$, we have
\begin{align*}
     T_3(x)&=\sum_{n\leq x}(1*d)(n)\\
&=\sum_{a\le u}\sum_{b\leq\frac{x}{a}}d(b)+\sum_{b\le v}\sum_{a\leq\frac{x}{b}}d(b)-\sum_{a\le u}\sum_{b\le v}d(b)\\
&=\sum_{a\le u}\sum_{b\leq\frac{x}{a}}d(b)+\sum_{b\le v}d(b)\sum_{a\leq\frac{x}{b}}1-\sum_{a\le u}\sum_{b\le v}d(b)
\end{align*}
for any $u, v\ge 1$ such that $uv=x$. By applying \eqref{meq}, we obtain 
\begin{align}\label{kwe}
   T_3(x)&=x\sum_{a\le u}\frac{\log (x/a)}{a}+(2\gamma-1)x\sum_{a\le u}\frac{1}{a}+\sum_{a\le u}\Delta(x/a) 
   + x\sum_{b\le v}\frac{d(b)}{b}-u\sum_{b\le v}d(b)\notag\\&\quad + E_2(v),
   \end{align}
   where $E_2(v)=\vartheta\left(\sum_{b\le v}d(b)\right)$. Note that $\vartheta$ refers to a number with an absolute value of no more than one. To obtain an explicit upper bound for $E_{2}(v)$, when $v \ge 6\times 10^{5}$ we follow
the approach of \cite[Section~6]{MR2869048}, where the error term
\[
\Delta(x)=\sum_{n\le x} d(n)-x(\log x+2\gamma-1)
\]
is computed for large values of $x$, and the size of the
remainder is explicitly controlled.  Applying the same argument to
$\sum_{b\le v} d(b)$, we obtain
\[
\left|
\sum_{b\le v} d(b) - \bigl(v\log v + (2\gamma - 1)v\bigr)
\right|
\le 0.173\, \sqrt{v}
\]
for all $v\ge 6\times 10^{5}$.
Hence, we have
\begin{equation}\label{amo}
    |E_{2}(v)|
    \le v\log v + (2\gamma - 1)v + 0.173\,\sqrt{v}.
\end{equation}

   From \cite[Lemma 3.3]{MR2257286}, we have the following result for $e^{3/2}\leq u\leq x$:
   \begin{equation}\label{bea}
       \sum_{a\le u}\frac{\log (x/a)}{a}=\log x\log u-\frac{\log^2 u}{2}+\gamma\log x-\gamma_1 +E_3(x,u),
   \end{equation}
   where the bound on remainder term is given by 
   \begin{equation}\label{cel}
       |E_3(x,u)|\leq  \frac{\log (x/u)}{2u}+\frac{\log x}{4u^2}.
   \end{equation}
Also, applying the Euler--Maclaurin summation formula yields
\begin{equation}\label{eco}
    \sum_{a\le u}\frac{1}{a}=\log u+\gamma+E_4(u),
\end{equation}
where $E_4(u)=\vartheta(c/u)$ and $c$ is a constant. In particular, 
\begin{align}\label{ama}
    |E_4(u)|&\le \frac{1}{2u}+\frac{1}{12u^{2}}\notag\\
    &\le \frac{0.501}{u}
\end{align}
for all $u\ge 84$.
 A result from Riesel and Vaughan \cite[Lemma 1]{MR706639} showed  that for $v> 1$, 
\begin{equation}\label{pet}
    \sum_{b\le v}\frac{d(b)}{b}=\frac{\log^2 v}{2}+2\gamma\log v+\gamma^2-2\gamma_1+E_5(v),
\end{equation}
where $|E_5(v)|<1.641v^{-1/3}$. This error term was improved by Berkane et al.\ \cite[Corollary 2.2]{MR2869048} to $|E_5(v)|<1.16v^{-1/3}$. A further improvement was established by Platt and Trudgian \cite[Lemma 1]{MR4100646} in which for $v\geq 1$, $|E_5(v)|<0.6877v^{-2/5}$. However, to obtain a bound of the form $x^{2/3}\log^{1/3}x$, we require $|E_5(v)|\leq cv^{-1/2}$, where $c$ is a constant. According to Cully-Hugill and Trudgian \cite[Section 2]{MR4311680}, for $v\geq 6\times10^{5}$, 
\begin{equation}\label{debb} 
|E_5(v)|\leq 1.001v^{-1/2}.
\end{equation}
This bound is suitable for our purposes.

Again, the fifth sum in the right-hand side of \eqref{kwe} is given by
\begin{equation}\label{mv}
    \sum_{b\leq v}d(b)=v\log v+(2\gamma -1)v+E_6(v)
\end{equation}
for $v\geq 6\times 10^{5}$ where,  
\begin{equation}\label{jul}
    |E_6(v)|\leq 0.173\sqrt{v}.
\end{equation}

Finally, let
\begin{equation}
    E_7(x,u)=\sum_{a\leq u}\Delta(x/a)\label{kaf}.
\end{equation}
 Note that throughout this part of the proof, we assume $x\ge 1.1 \cdot 10^{10}$. The remaining range can be verified numerically.
 To obtain a bound on \eqref{kaf}, we take 
 \begin{equation*}
 u\asymp x^{1/3}(\log x)^{2/3}.    
 \end{equation*}
 Applying the estimate in \eqref{tk}, and noting that $x/a\ge x/u>5560$, we obtain 
\begin{align}    
    |E_7(x,u)|&\leq\sum_{a\leq u}|\Delta(x/a)|\notag\\
    & \le 0.397\sqrt{x} \sum_{a\le u} \frac{1}{\sqrt{a}} \notag\\
    &\le 0.397\sqrt{x}\left(1+\int_{1}^{u}\frac{1}{\sqrt{a}}\,\text{d}a\right) \notag\\
    &<0.794\sqrt{xu}\label{syl}.
\end{align}

Substituting \eqref{bea}, \eqref{eco}, \eqref{pet}, \eqref{mv}, \eqref{kaf} and  $v=x/u$ into \eqref{kwe} and simplifying gives
\begin{align*}
    T_3(x)&=x\left(\frac{\log^2 x}{2}+(3\gamma-1)\log x+3\gamma^2-3\gamma_1-3\gamma+1\right)+\Delta_{3}(x, u),
\end{align*}
where 
\begin{align}\label{err}
   \Delta_{3}(x, u)&=E_2(x, u)+xE_3(x, u)+x(2\gamma-1)E_4(u)+xE_5(x, u)-uE_6(x, u)\notag\\&\quad+E_7(x, u).
\end{align}
 Bounding \eqref{err} and substituting \eqref{amo}, \eqref{cel}, \eqref{ama}, \eqref{debb}, \eqref{jul} and \eqref{syl}  gives
     \begin{align}\label{abn}
         |\Delta_{3}(x, u)|&< \frac{3x\log (x/u)}{2u}+\frac{x\log x}{4u^2}+\frac{1.501(2\gamma-1)x}{u}+1.968\sqrt{xu}+0.173\sqrt{x/u}.
     \end{align}
     
     Since $u\geq 1 $ and $x\geq u$, we focus on identifying the dominant terms on the right of \eqref{abn}. Notably, the first term and the fourth term grow faster than the other terms as $x$ increases. Therefore, we set 
     \begin{equation}\label{wer}
         u=Ax^{1/3}\log^{2/3}x,
     \end{equation}
     with $A>0$, and optimize over $A$. 

With this choice, the main contribution to the coefficient of $x^{2/3}\log^{1/3}x$ is
\begin{equation*}
    F(A)\;=\;\frac{1}{A} + 1.968\sqrt{A}.
\end{equation*}
Minimizing \(F(A)\) over \(A>0\) gives $A_0\approx1.011$, and the corresponding minimal value $F(A_0)=2.968$. Hence, the simple choice $A=1$ is essentially optimal and yields the same constant as $F(A_{0})$ with $A_{0}=1.011$.
     
     Substituting \eqref{wer} with $A=1$ into \eqref{abn} and simplifying gives
     \begin{align}\label{fbfi}
    |\Delta_3(x)|
&<2.968x^{2/3}\log^{1/3}x+\frac{(1.501(2\lambda-1)-\log\log x)x^{2/3}}{\log^{2/3}x}+\frac{0.423x^{1/3}}{\log^{1/3}x}.
     \end{align}
We observe that the sum of the last two terms in \eqref{fbfi} is negative for all $x\ge 4$.
However, in order for \eqref{abn} to apply,
the parameter $u$ must satisfy
\begin{equation*}
    84 \le x^{1/3}(\log x)^{2/3} 
    \le \frac{x}{600000}.
\end{equation*}
A numerical check shows that the lower inequality holds for all $x\ge 7454.828$ and the upper inequality holds when $x \ge 1.1\times 10^{10}$. 

Therefore, we obtain 
\begin{equation*}
|\Delta_3(x)|< 2.968\,x^{2/3}\log^{1/3} x.
\end{equation*}  
for all $x\ge 1.1\cdot 10^{10}$.
     We  confirm that this result is true for all $2 \le x < 1.1\cdot 10^{10}$ by computing the partial sums of $d_3(n)$ using \textit{Mathematica}.

\section{Useful lemmas for the proof of Theorem \ref{dfg}}
In this section, we prove the main lemmas essential for the proof of Theorem \ref{dfg}. 
For our purposes, we assume that the well-known result for $k=2$ is as follows:
            \begin{equation*}
                T_2(x)=xP_2(\log x)+\Delta(x),
            \end{equation*}
            where the error term,  $|\Delta(x)|\le\lambda x^\frac{1}{2}$ with $\lambda$ as established in \eqref{eddy}, \eqref{cele} and \eqref{tk}.
We also assume that
\begin{equation}\label{mlo}
             T_{k-1}(x)=xP_{k-1}(\log x)+\Delta_{k-1}(x),
    \end{equation}
            where the error term
\begin{equation}\label{bnj}   |\Delta_{k-1}(x)|\leq \lambda_{k-1}x^{\frac{k-2}{k-1}}(\log x)^{\frac{(k-2)(k-3)}{2(k-1)}} 
            \end{equation}
is true for all positive integers $k>2$.
\begin{lemma}\label{gvg}
    Let $k\ge 3$ and $N\ge 1$ be a natural number. Then we have
    \begin{equation}\label{kawa}
        \sum_{n\le x/U}\frac{d_{k-1}(n)}{n}=\sum_{n\le N}\frac{T_{k-1}(n)}{n(n+1)}+\frac{T_{k-1}(N)}{N+1}.
    \end{equation}
\end{lemma}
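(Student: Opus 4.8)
The plan is to establish \eqref{kawa} by partial summation, exploiting that $d_{k-1}$ is an arithmetic function whose summatory function is exactly $T_{k-1}$. First I would note that since $d_{k-1}(n)$ is supported on the positive integers and $N=\floor{x/U}$, the left-hand side is simply the finite sum $\sum_{n=1}^{N} d_{k-1}(n)/n$. I would then write $d_{k-1}(n)=T_{k-1}(n)-T_{k-1}(n-1)$, with the convention $T_{k-1}(0)=0$, so that the sum becomes a difference of two sums that can be reindexed against each other.

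Next I would carry out the summation by parts explicitly. Splitting $\sum_{n=1}^{N}\bigl(T_{k-1}(n)-T_{k-1}(n-1)\bigr)/n$ into two pieces and shifting the index in the second piece (replacing $n$ by $n+1$), the $T_{k-1}(0)$ contribution drops out and the two sums recombine into $\sum_{n=1}^{N-1}T_{k-1}(n)\bigl(1/n-1/(n+1)\bigr)$ together with a leftover boundary term coming from $n=N$. The elementary identity $1/n-1/(n+1)=1/\bigl(n(n+1)\bigr)$ then converts the main part into $\sum_{n=1}^{N-1}T_{k-1}(n)/\bigl(n(n+1)\bigr)$, which is the truncated version of the sum on the right of \eqref{kawa}.

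The one point needing care — and the only (mild) obstacle — is reconciling the boundary term with the stated form. The raw partial summation leaves a contribution $T_{k-1}(N)/N$, whereas \eqref{kawa} carries both the full telescoping sum up to $n=N$ and a separate term $T_{k-1}(N)/(N+1)$. I would close the gap by observing that the two right-hand terms of \eqref{kawa} exceed $\sum_{n=1}^{N-1}T_{k-1}(n)/\bigl(n(n+1)\bigr)$ by precisely $T_{k-1}(N)\left(\dfrac{1}{N(N+1)}+\dfrac{1}{N+1}\right)$, and this simplifies to $T_{k-1}(N)/N$ because $\dfrac{1}{N(N+1)}+\dfrac{1}{N+1}=\dfrac{1+N}{N(N+1)}=\dfrac{1}{N}$. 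This matches the boundary term produced by the partial summation, so both sides of \eqref{kawa} agree. The entire argument is elementary bookkeeping, requiring no analytic input beyond the definition of $T_{k-1}$.
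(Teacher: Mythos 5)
Your proposal is correct and follows essentially the same route as the paper: both write $d_{k-1}(n)=T_{k-1}(n)-T_{k-1}(n-1)$, telescope via an index shift to get $\sum_{n\le N-1}T_{k-1}(n)/\bigl(n(n+1)\bigr)+T_{k-1}(N)/N$, and then absorb the boundary term using $\tfrac{1}{N(N+1)}+\tfrac{1}{N+1}=\tfrac{1}{N}$ (the paper phrases this as adding and subtracting $T_{k-1}(N)/\bigl(N(N+1)\bigr)$ to extend the sum to $n=N$). No gaps; your bookkeeping at the boundary matches the paper's exactly.
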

\begin{proof}
 We begin with the arithmetic function $d_{k-1}(n)=T_{k-1}(n)-T_{k-1}(n-1)$.  For $N=\floor{x/U}$, the sum on the left-hand side of \eqref{kawa} is given by:
        \begin{align*}
        \sum_{n\le N}\frac{d_{k-1}(n)}{n}&=\sum_{n\le N}\frac{T_{k-1}(n)}{n}-\sum_{n\le N}\frac{T_{k-1}(n-1)}{n}.
        \end{align*}
Shifting the index in the second sum on the right and noting that $T_{k-1}(0)=0$, the right-hand side becomes
\begin{align*}
\sum_{n\le N}\frac{T_{k-1}(n)}{n}-\sum_{n\le N-1}\frac{T_{k-1}(n)}{n+1}=\sum_{n\le N-1}\frac{T_{k-1}(n)}{n(n+1)}+\frac{T_{k-1}(N)}{N}.
        \end{align*}
        Adjusting the summation limit to include $N$, we add and subtract the term $\frac{T_{k-1}(N)}{N(N+1)}$. This gives \eqref{kawa} as required.
\end{proof}

Next, we offer various integral estimates required for the proof of Theorem \ref{dfg}. Although it is possible to obtain a more precise estimate for these integrals, the current estimate is sufficient for our purposes.
\begin{lemma}\label{hhn}
Let $1<a<x$, $\alpha>0$, $0<\beta<1$ and $U\ge 1$. Then we have
        \begin{align*}
\int_{1}^{U}\frac{\log^\alpha(x/a)}{a^{\beta}}\,\mathrm{d}a&\le \frac{\left(U^{1-\beta}-1\right)\log^{\alpha}x}{1-\beta}.
        \end{align*}
                \end{lemma}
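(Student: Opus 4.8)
The plan is to exploit monotonicity: since the integration variable ranges over $a>1$, we always have $\log a>0$, hence $\log(x/a)=\log x-\log a<\log x$. Because $x>a$ throughout the range, the argument $\log(x/a)$ is strictly positive, so raising to the positive power $\alpha$ preserves the inequality (the map $t\mapsto t^{\alpha}$ is increasing on $(0,\infty)$ for $\alpha>0$). This gives the pointwise bound $\log^{\alpha}(x/a)<\log^{\alpha}x$ on $(1,U]$, which is the entire content of the estimate.

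Concretely, I would first record the pointwise inequality
\begin{equation*}
\frac{\log^{\alpha}(x/a)}{a^{\beta}}<\frac{\log^{\alpha}x}{a^{\beta}},\qquad 1<a\le U,
\end{equation*}
justified as above, and then integrate both sides over $a\in[1,U]$. Since the factor $\log^{\alpha}x$ is constant in $a$, it pulls out of the integral, leaving the elementary power integral
\begin{equation*}
\int_{1}^{U}\frac{\mathrm{d}a}{a^{\beta}}=\left[\frac{a^{1-\beta}}{1-\beta}\right]_{1}^{U}=\frac{U^{1-\beta}-1}{1-\beta},
\end{equation*}
which is finite and valid precisely because $0<\beta<1$ and $U$ is finite. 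Multiplying through by $\log^{\alpha}x$ yields the claimed right-hand side.

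For the strictness of the inequality, I would note that the pointwise bound is an equality only at the single endpoint $a=1$ and is strict on the open interval $(1,U)$; since $U>1$ this strict region has positive measure, so integrating produces a strict inequality overall. There is no genuine obstacle here — the only points requiring a word of care are (i) confirming $\log(x/a)>0$ so that the $\alpha$-th power is well defined and monotonicity applies, which follows from the hypothesis $x>a$, and (ii) ensuring the exponent $1-\beta$ is positive so the antiderivative is the stated increasing power. Both are guaranteed by the stated assumptions, so the estimate follows immediately.
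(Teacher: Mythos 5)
Your proposal is correct and follows essentially the same route as the paper: both bound $\log^{\alpha}(x/a)$ pointwise by $\log^{\alpha}x$ using $\log a>0$ on the range of integration, pull the constant out, and evaluate the elementary integral $\int_{1}^{U}a^{-\beta}\,\mathrm{d}a=\frac{U^{1-\beta}-1}{1-\beta}$. Your added remarks on why the power-map monotonicity applies (positivity of $\log(x/a)$) and on where strictness of the integrated inequality comes from are slightly more careful than the paper's one-line justification, but they do not change the argument.
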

                \begin{proof}
        Since $\alpha$ and $\beta$ are positive  and $1<a<x$, it follows immediately that $\log x>\log a$. Hence, we have 
        \begin{align*}
    \int_{1}^{U}\frac{\log^\alpha(x/a)}{a^{\beta}}\,\mathrm{d}a&\le\log^{\alpha}x\int_{1}^{U}\frac{1}{a^{\beta}}\,\mathrm{d}a.
        \end{align*} 
    Integrating and simplifying concludes the proof.
    \end{proof}
    
        \begin{lemma}\label{ghj}
        Given that $x/U>1$ and $k>2$, we have
            \begin{align*}
        \int_{x/U}^{\infty} \frac{(\log t)^\frac{(k-2)(k-3)}{2(k-1)}}{t^{\frac{1}{k-1}}(t+1)}\,\mathrm{d}t\le\frac{(k-1)U^{\frac{1}{k-1}}(\log (x/U))^\frac{(k-2)(k-3)}{2(k-1)}}{x^{\frac{1}{k-1}}}\left(1+\frac{k^2-3k+4}{2\log(x/U)}\right).
            \end{align*}
        \end{lemma}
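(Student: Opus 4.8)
The plan is to reduce the stated integral to a standard one by a single integration by parts and then to control the resulting remainder by one division. Throughout write $\gamma=\frac{(k-2)(k-3)}{2(k-1)}$ and $\delta=\frac{1}{k-1}$ for the two exponents, and set $L=x/U>1$, so that $U^{1/(k-1)}x^{-1/(k-1)}=L^{-\delta}$, $\frac1\delta=k-1$, and $\frac{\gamma+1}{\delta}=\frac{k^2-3k+4}{2}$. With this notation the target right-hand side is exactly $\frac1\delta L^{-\delta}(\log L)^{\gamma}\bigl(1+\frac{\gamma+1}{\delta\log L}\bigr)$, which already isolates what has to be shown.

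First I would discard the $+1$ in the denominator: since $t+1>t$,
\[
\int_{L}^{\infty}\frac{(\log t)^{\gamma}}{t^{\delta}(t+1)}\,\mathrm{d}t<\int_{L}^{\infty}(\log t)^{\gamma}\,t^{-(\delta+1)}\,\mathrm{d}t=:I,
\]
so it suffices to bound $I$. Integrating by parts with $u=(\log t)^{\gamma}$ and $\mathrm{d}v=t^{-(\delta+1)}\,\mathrm{d}t$ (the boundary term at infinity vanishes because $\delta>0$) gives
\[
I=\frac{1}{\delta}(\log L)^{\gamma}L^{-\delta}+\frac{\gamma}{\delta}\int_{L}^{\infty}(\log t)^{\gamma-1}\,t^{-(\delta+1)}\,\mathrm{d}t.
\]
The first term already reproduces the main term of the claim, so everything hinges on the remainder integral.

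The key step, which avoids any case distinction on the sign of $\gamma-1$, is to write $(\log t)^{\gamma-1}=(\log t)^{\gamma}/\log t\le(\log L)^{-1}(\log t)^{\gamma}$ for $t\ge L$. This bounds the remainder integral by $(\log L)^{-1}I$, whence $I\le\frac1\delta(\log L)^{\gamma}L^{-\delta}+\frac{\gamma}{\delta\log L}I$. Solving for $I$ (legitimate once $\delta\log L>\gamma$) yields $I\le\frac1\delta(\log L)^{\gamma}L^{-\delta}\bigl(1-\frac{\gamma}{\delta\log L}\bigr)^{-1}$. It then remains to verify the elementary inequality $\bigl(1-\frac{\gamma}{\delta\log L}\bigr)^{-1}\le 1+\frac{\gamma+1}{\delta\log L}$; cross-multiplying, this is equivalent to $\delta\log L\ge\gamma(\gamma+1)$, that is $\log(x/U)\ge\frac{(k-2)(k-3)(k^2-3k+4)}{4(k-1)}$, and the same condition also secures $\delta\log L>\gamma$. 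Substituting $\frac1\delta=k-1$, $L^{-\delta}=U^{1/(k-1)}x^{-1/(k-1)}$ and $\frac{\gamma+1}{\delta}=\frac{k^2-3k+4}{2}$ recovers the asserted bound.

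I expect the main obstacle to be the large-$k$ regime: once $k\ge 6$ one has $\gamma>1$, so the remainder integrand $(\log t)^{\gamma-1}$ is increasing and cannot be bounded by its value at $t=L$; the division trick $(\log t)^{\gamma-1}=(\log t)^{\gamma}/\log t$ is precisely what rescues the argument uniformly in $k$. The second point to watch is that the estimate genuinely needs $\log(x/U)$ to exceed the $O(k^3)$ threshold above rather than merely $x/U>1$. This is harmless in the application, where $U$ is chosen so that $x/U\asymp x^{(k-1)/k}$ and the values $x_0$ recorded in Table~\ref{lambda} make $\log(x/U)$ far larger than $\gamma(\gamma+1)/\delta$, but it is implicit in the statement and should be flagged explicitly.
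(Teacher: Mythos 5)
Your proof is correct under the extra hypothesis you flag, and in outline it parallels the paper's argument: the paper substitutes $t=e^{m}$, discards the factor $(1+e^{-m})^{-1}\le 1$ (your $t+1>t$), and then integrates by parts once, exactly as you do directly in the $t$-variable. The genuine difference is how the remainder $\frac{\gamma}{\delta}\int_{L}^{\infty}(\log t)^{\gamma-1}t^{-(\delta+1)}\,\mathrm{d}t$ is controlled. Your absorb-and-solve device --- writing $(\log t)^{\gamma-1}\le(\log t)^{\gamma}/\log L$ and solving the resulting inequality for $I$ --- is uniform in $k$, but it costs the threshold $\delta\log L\ge\gamma(\gamma+1)$, i.e.\ $\log(x/U)\ge\frac{(k-2)(k-3)(k^2-3k+4)}{4(k-1)}$. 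The paper's proof simply asserts that ``integration by parts gives'' the stated bound and records no such condition.

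Your closing caveat is in fact more than a caveat: the lemma as stated, assuming only $x/U>1$ and $k>2$, is false for $k\ge 6$, where $\gamma=\frac{(k-2)(k-3)}{2(k-1)}>1$. Take $k=6$ (so $\gamma=1.2$, $\delta=0.2$) and $\log(x/U)=0.01$: after the substitution $t=e^{m}$ the left side is $\int_{0.01}^{\infty}m^{1.2}e^{-0.2m}(1+e^{-m})^{-1}\,\mathrm{d}m$, which exceeds $31$ (the integral without the factor $(1+e^{-m})^{-1}$ is $\approx\Gamma(2.2)\cdot 0.2^{-2.2}\approx 38$, and the factor is close to $1$ where the mass sits), while the right side is $5\,e^{-0.002}(0.01)^{1.2}\left(1+\frac{22}{0.02}\right)\approx 22$. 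So a lower bound on $\log(x/U)$ of the kind you identify is needed not merely by your method but by the statement itself; the paper's glossed integration by parts hides this. For $k\in\{3,4,5\}$ one has $\gamma\le 1$ and the simpler monotonicity bound $(\log t)^{\gamma-1}\le(\log L)^{\gamma-1}$ yields the lemma for all $x/U>1$ (even with $\gamma$ in place of $\gamma+1$ in the secondary term), so no threshold is needed there. In the application the gap is harmless, since $\log(x/U)\approx\frac{k-1}{k}\log x$ and the values of $x_{0}$ in Table~\ref{lambda} comfortably exceed the $O(k^{3})$ threshold, but the lemma should carry the hypothesis you state.
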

        \begin{proof}
             Since $k>2$, the integral converges. Therefore, to estimate this integral, we use the substitution $t=e^m$. This yields
          \begin{align*}
          \int_{x/U}^{\infty} \frac{(\log t)^\frac{(k-2)(k-3)}{2(k-1)}}{t^{\frac{1}{k-1}}(t+1)}\,\mathrm{d}t&= \int_{\log(x/U)}^{\infty} \frac{m^\frac{(k-2)(k-3)}{2(k-1)}}{e^{\frac{m}{k-1}}(1+e^{-m})}\,\mathrm{d}m.
        \end{align*}
Since $m\ge \log(x/U)$ and $x>U>1$, it follows that $1\leq 1+e^{-m}< 2$. So we have  
\begin{align*}
    \int_{\log(x/U)}^{\infty} \frac{m^\frac{(k-2)(k-3)}{2(k-1)}}{e^{\frac{m}{k-1}}(1+e^{-m})}\,\mathrm{d}m&\leq \int_{\log(x/U)}^{\infty} m^\frac{(k-2)(k-3)}{2(k-1)}\cdot e^{\frac{-m}{k-1}}\,\mathrm{d}m.
    \end{align*}
    Using integration by parts gives 
    \begin{align*}
    &\leq\frac{(k-1)U^{\frac{1}{k-1}}(\log (x/U))^\frac{(k-2)(k-3)}{2(k-1)}}{x^{\frac{1}{k-1}}}\left(1+\frac{k^2-3k+4}{2\log(x/U)}\right).
\end{align*}
        \end{proof}
       In a final sequence of lemmas, we use the estimates from the previous lemmas.
\begin{lemma}\label{ccx}
Let $k\ge 3$ be an integer and let $x>U>1$. Then
\begin{equation}\label{nd}
    \sum_{a\le U} T_{k-1}\!\left(\frac{x}{a}\right)
    = M_{k}(x,U) + R_{2}(k,x,U),
\end{equation}
where
\[
M_{k}(x,U):= \sum_{r=0}^{k-2} Q_{\,k-r-2}(\log x)\left(\frac{\log^{\,r+1}U}{r+1}+\gamma_{r}\right),
\]
the functions $Q_{m}$ are polynomials in $\log x$ of degree at most $m$, and the constants
$\gamma_{r}$ is as defined in \eqref{fk}.
The error term satisfies
\begin{equation}\label{dds}
\begin{split}
    |R_{2}(k,x,U)|
    &\le \lambda_{k-1}(k-1)\,x^{\frac{k-2}{k-1}}\Big(U^{\frac{1}{k-1}}-1\Big)
        (\log x)^{\frac{(k-2)(k-3)}{2(k-1)}}\\
    &\qquad{}+(k-1)\log^{\,k-2}x\left(\frac{1}{2U}+\frac{k-2}{8U^{2}\log U}\right).
\end{split}
\end{equation}
\end{lemma}

\begin{proof} 
     Applying \eqref{mlo} gives
     \begin{equation}\label{aug}
        \sum_{a\le U}T_{k-1}\left(\frac{x}{a}\right)=x\sum_{a\le U}\frac{1}{a}P_{k-1}\left(\log \left(\frac{x}{a}\right)\right)+E(k,x,U),
            \end{equation}
    with an error term
\begin{equation}\label{dav}
        E(k,x,U)=\sum_{a\le U}\Delta_{k-1}\left(\frac{x}{a}\right).
            \end{equation}
             To obtain a bound for $E(k,x,U)$, we use the bound on $\Delta_{k-1}(x)$ in \eqref{bnj}. That is  
            \begin{align*}
         |E(k,x,U)|\leq\lambda_{k-1}x^{\frac{k-2}{k-1}}\sum_{a\leq U}\frac{(\log (x/a))^{\frac{(k-2)(k-3)}{2(k-1)}}}{a^{\frac{k-2}{k-1}}}.
                \end{align*}
        Using the result from Lemma \ref{hhn} gives
        \begin{align}\label{eerr}
        |E(k,x,U)|&<\lambda_{k-1}(k-1)\left(U^\frac{1}{k-1}-1\right)x^\frac{k-2}{k-1}(\log x)^{\frac{(k-2)(k-3)}{2(k-1)}}.
                \end{align}  
            
            The sum on the right-hand side of equation \eqref{aug} can be expressed as:
         \begin{align}\label{ghh}
         \sum_{a\le U}\frac{1}{a}P_{k-1}\left(\log \left(\frac{x}{a}\right)\right)&=\sum_{r=0}^{k-2}Q_{k-r-2}(\log x)\sum_{a\leq U}\frac{\log^r a}{a}.
         \end{align}
          Note that when $r=0$, $Q_{k-2}(\log x)=P_{k-1}(\log x)$. Applying the Euler--Maclaurin summation formula to the sum over $a\leq U$ in the right-hand side of \eqref{ghh} for a non-negative integer $r$ in the above equation yields   
    \begin{equation}\label{hhgf}
        \sum_{a\leq U}\frac{\log^{r}a}{a}=\frac{(\log U)^{r+1}}{r+1}+\gamma_{r}-B_{1}(U)f_{r}(U)+\frac{B_{2}(U)}{2}f_{r}'(U)+E_{r}(U),
    \end{equation}
    where $f_r(U)=(\log U)^{r}/U$, $B_1(U)=U-\floor{U}-\frac{1}{2}$, $B_{2}(U)=\frac{1}{2}\left(U-\floor{U}-\frac{1}{2}\right)^2$, and the error term 
    \begin{equation*}
        E_{r}(U)=\frac{1}{2}\int_{U}^{\infty}B_{2}(t)f_{r}''(t)\mathrm{d}t.
    \end{equation*} 
Hence, we have 
    \begin{align}\label{bmn}
               \sum_{r=0}^{k-2}Q_{k-r-2}(\log x)\sum_{a\leq U}\frac{\log^r a}{a}&=M_{k}(x,U)+E_{*}(k,x,U),
            \end{align}
            where the main term is given by 
            \begin{align*}
        M_{k}(x,U)= \sum_{r=0}^{k-2}Q_{k-r-2}(\log x)\left(\frac{\log^{r+1}U}{r+1}+\gamma_{r}\right)
            \end{align*}
            and the error term  
            \begin{align*}
                E_{*}(k,x,U)=\sum_{r=0}^{k-2}Q_{k-r-2}(\log x)\left(-B_{1}(U)f_{r}(U)+\frac{B_{2}(U)}{2}f_{r}'(U)+E_{r}(U)\right).
            \end{align*}
Since $-\frac{1}{2}\leq B_1(x)<\frac{1}{2}$ and $0\leq B_2(x)\leq\frac{1}{8}$, we have 
\begin{align*}
    |E_{*}(k,x,U)|&\leq \sum_{r=0}^{k-2}|Q_{k-r-2}(\log x)|\left|-B_{1}(U)f_{r}(U)+\frac{B_{2}(U)}{2}f_{r}'(U)+E_{r}(U)\right|.
    \end{align*}

To establish a bound for $Q_{k-r-2}(\log x)$, we begin by noting that
\begin{equation*}
   P_{k-1}(t)=\frac{t^{k-2}}{(k-2)!}+O(t^{k-3}). 
\end{equation*}
Now, by expanding $P_{k-1}(\log x-\log a)$ as a polynomial in $\log a$, we observe that the coefficient of
$\log^{r}a$ is
\begin{equation*}
    Q_{k-r-2}(\log x)
 = \frac{1}{(k-2)!}\binom{k-2}{r}(\log x)^{k-r-2}
   + O((\log x)^{k-r-3}).
\end{equation*}
For $0 \le r \le k-2$, the constant
\begin{equation*}
\frac{1}{(k-2)!}\binom{k-2}{r}
\end{equation*}
is bounded above by an absolute constant depending only on $k$, and in particular is at most $1$.
Since the corresponding error term is of lower order, the leading term dominates for sufficiently large $x$. Hence,  we have
\begin{equation*}
|Q_{k-r-2}(\log x)| \le (\log x)^{k-r-2}.    
\end{equation*}

It follows that 
    \begin{align*}
   |E_{*}(k,x,U)|&\leq\sum_{r=0}^{k-2}(\log x)^{k-r-2}\left(\frac{\log^r(U)}{2U}\right)\\&\quad+\sum_{r=0}^{k-2}(\log x)^{k-r-2}\left(\frac{\log^{r-1}(U)(r-\log U)}{16U^2}+C_r\right),
\end{align*}
where 
\begin{align}\label{dbbs}
    C_r=|E_r(U)|&\leq\left| \frac{1}{16}\int_{U}^{\infty}f_{r}''(t)\mathrm{d}t\right|\notag\\
    &=\left|\frac{f_{r}'(U)}{16}\right|\notag\\
    &\le \frac{\log^{r-1}U}{16U^{2}}(r+\log U).
\end{align}
As a result, 
\begin{align}\label{dcx}
|E_{*}(k,x,U)|&\le\sum_{r=0}^{k-2}(\log x)^{k-r-2}\left(\frac{\log^{r}U}{2U}+\frac{r\log^{r-1}U}{8U^{2}}\right)\notag\\
&\le (k-1)\log^{k-2} x\left(\frac{1}{2U}+\frac{k-2}{8U^{2}\log U}\right).
    \end{align} 
Substituting \eqref{dav} and \eqref{bmn} into \eqref{aug} gives \eqref{nd} as required, where $R_{2}(k,x,U)$ is the sum of $E(k,x,U)$ and $E_{*}(k,x,U)$. Also, summing \eqref{eerr} and \eqref{dcx} we obtain \eqref{dds} as required.
\end{proof}
\begin{lemma}\label{cmc}
For $k>2$ and $x>U>1$, we have
\begin{equation}\label{fes}
    \sum_{n\le x/U}\frac{d_{k-1}(n)}{n}=p_k(\log(x/U))+R_{3}(k,x,U),
\end{equation}
where $p_k(x)$ is a polynomial of degree at most $k$ and
\begin{align}\label{mma}
    |R_{3}(k,x,U)|&\le\frac{(k^{3}-4k^{2}+7k-4)\lambda_{k-1}U^{\frac{1}{k-1}}(\log (x/U))^\frac{k^{2}-7k+8}{2(k-1)}}{2x^{\frac{1}{k-1}}}\notag\\&\quad+\frac{k\lambda_{k-1}U^{\frac{1}{k-1}}(\log (x/U))^\frac{(k-2)(k-3)}{2(k-1)}}{x^{\frac{1}{k-1}}}+|\xi_{*}(k,x,U)|.
\end{align}
The term $\xi_{*}(k,x,U)$ will be estimated explicitly in Lemma~\ref{lem:Mstar}.

\end{lemma}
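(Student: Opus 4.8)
The plan is to mirror the strategy of Lemma~\ref{ccx}: convert the weighted sum by means of Lemma~\ref{gvg}, feed in the inductive asymptotic \eqref{mlo}--\eqref{bnj} for $T_{k-1}$, and split the result into a polynomial part, which becomes $p_k(\log(x/U))$, and a genuine error part, which is controlled by the integral estimate of Lemma~\ref{ghj}. Concretely, I would set $N=\floor{x/U}$ and apply Lemma~\ref{gvg} to obtain
\[
\sum_{n\le x/U}\frac{d_{k-1}(n)}{n}=\sum_{n\le N}\frac{T_{k-1}(n)}{n(n+1)}+\frac{T_{k-1}(N)}{N+1}.
\]
Substituting $T_{k-1}(n)=nP_{k-1}(\log n)+\Delta_{k-1}(n)$ and using $\frac{n}{n(n+1)}=\frac{1}{n+1}$ splits the right-hand side into the main contribution $\sum_{n\le N}\frac{P_{k-1}(\log n)}{n+1}+\frac{N}{N+1}P_{k-1}(\log N)$ and the error contribution $\sum_{n\le N}\frac{\Delta_{k-1}(n)}{n(n+1)}+\frac{\Delta_{k-1}(N)}{N+1}$.

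For the main contribution I would write $\frac{1}{n+1}=\frac{1}{n}-\frac{1}{n(n+1)}$ and apply the Euler--Maclaurin formula to each monomial sum $\sum_{n\le N}\frac{\log^r n}{n}$, exactly as in Lemma~\ref{ccx}, invoking the definition \eqref{fk} of the Euler--Stieltjes constants $\gamma_r$. This yields a polynomial in $\log N$ of degree $k-1\le k$ together with the constants $\gamma_r$ and the convergent sums $\sum_{n\ge 1}\frac{\log^r n}{n(n+1)}$, which I absorb into the coefficients of $p_k$; writing $\frac{N}{N+1}P_{k-1}(\log N)=P_{k-1}(\log N)-\frac{P_{k-1}(\log N)}{N+1}$ supplies the remaining polynomial piece. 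Replacing $\log N$ by $\log(x/U)$ (valid since $x/U-1<N\le x/U$) then expresses the main term as $p_k(\log(x/U))$, and all the Euler--Maclaurin boundary terms, the tails of the convergent sums, and the truncation remainders from these replacements are majorised by the second term $\frac{\log^{k-2}x}{U}$ of \eqref{mma}.

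For the error contribution I would complete $\sum_{n\le N}\frac{\Delta_{k-1}(n)}{n(n+1)}$ to the convergent series $\sum_{n\ge 1}$, folding the limiting constant into $p_k$, leaving the tail $-\sum_{n>N}\frac{\Delta_{k-1}(n)}{n(n+1)}$ together with $\frac{\Delta_{k-1}(N)}{N+1}$. Bounding $|\Delta_{k-1}|$ by \eqref{bnj} and comparing the monotone tail to an integral gives $\sum_{n>N}\frac{|\Delta_{k-1}(n)|}{n(n+1)}\le \lambda_{k-1}\int_{x/U}^{\infty}\frac{(\log t)^{(k-2)(k-3)/(2(k-1))}}{t^{1/(k-1)}(t+1)}\,\mathrm{d}t$, which is exactly the quantity estimated in Lemma~\ref{ghj}. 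The leading part of that estimate combines with the boundary term $\frac{\Delta_{k-1}(N)}{N+1}$, also bounded by \eqref{bnj}, to produce the third term of \eqref{mma} with constant $k=(k-1)+1$, while the correction factor $1+\frac{k^2-3k+4}{2\log(x/U)}$ yields the first term, since $(k-1)\cdot\frac{k^2-3k+4}{2}=\frac{k^3-4k^2+7k-4}{2}$ and the exponent of $\log(x/U)$ falls by $1$ to $\frac{k^2-7k+8}{2(k-1)}$. Collecting the three contributions gives \eqref{mma}.

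I expect the main obstacle to be the bookkeeping of the main term rather than the error term: carrying the Euler--Maclaurin expansion of $\sum_{n\le N}\frac{P_{k-1}(\log n)}{n+1}$ for a general degree-$(k-2)$ polynomial weight, confirming that every constant produced is legitimately absorbed into $p_k$, and checking that the combined boundary and truncation remainders — together with the routine but fiddly handling of $N=\floor{x/U}$ versus $x/U$ when passing from the tail sum to the integral of Lemma~\ref{ghj} — are indeed all dominated by $\frac{\log^{k-2}x}{U}$.
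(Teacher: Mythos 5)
Your proposal follows the paper's proof essentially step for step: the same application of Lemma~\ref{gvg} and \eqref{mlo} to split the sum into a main part absorbed into $p_k(\log(x/U))$ (including the convergent series $\sum_{n\ge 1}\Delta_{k-1}(n)/(n(n+1))$) and an error part $\sum_{n>x/U}\Delta_{k-1}(n)/(n(n+1))+\Delta_{k-1}(N)/(N+1)$, bounded via \eqref{bnj} and Lemma~\ref{ghj} with exactly the paper's constants $k=(k-1)+1$ and $(k-1)(k^2-3k+4)/2=(k^3-4k^2+7k-4)/2$ and the same drop of the logarithmic exponent to $\frac{k^2-7k+8}{2(k-1)}$. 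If anything, your Euler--Maclaurin bookkeeping for the main term is more detailed than the paper's, which simply asserts $|\xi_{*}(k,x,U)|\le \log^{k-2}x/U$ without derivation.
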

\begin{proof}
    To evaluate the sum on the left-hand side of \eqref{fes}, we use \eqref{mlo} with the result in Lemma \ref{gvg}. This allows us to derive the following expression:
           \begin{align}\label{bdn}
        \sum_{n\le x/U}\frac{d_{k-1}(n)}{n}&=\sum_{n\le x/U}\left(\frac{P_{k-1}(\log n)}{(n+1)}+\frac{\Delta_{k-1}(n)}{n(n+1)}\right) +\frac{NP_{k-1}(\log n)}{N+1}+\frac{\Delta_{k-1}(N)}{N+1}\notag\\
&=M_{*}(k,x,U)+\xi(k,x,U),
        \end{align}
    where the main term
    \begin{align}\label{ktu}
        M_{*}(k,x,U):=\sum_{n\le x/U}\frac{P_{k-1}(\log n)}{(n+1)}+\frac{NP_{k-1}(\log N)}{N+1},
    \end{align}
    and the error term
    \begin{align*}
\xi(k,x,U)&=\sum_{n\le x/U}\frac{\Delta_{k-1}(n)}{n(n+1)}+\frac{\Delta_{k-1}(N)}{N+1}\\
&=\sum_{n\ge 1}\frac{\Delta_{k-1}(n)}{n(n+1)}-\sum_{n> x/U}\frac{\Delta_{k-1}(n)}{n(n+1)}+\frac{\Delta_{k-1}(N)}{N+1}.
    \end{align*}
Taking the absolute value of both sides yields
\begin{align}\label{dx}
|\xi(k,x,U)|&\le C_{k}+\left|\sum_{n> x/U}\frac{\Delta_{k-1}(n)}{n(n+1)}\right|+\left|\frac{\Delta_{k-1}(N)}{N+1}\right|\notag\\
&\le C_{k}+\sum_{n> x/U}\frac{|\Delta_{k-1}(n)|}{n(n+1)}+\frac{|\Delta_{k-1}(N)|}{N+1},
\end{align}
where $C_{k}$ is a constant depending on $k$. 

To obtain an upper bound estimate for the sums on the right-hand side, we start with the inductive bound in \eqref{bnj}. That is,
        \begin{align}\label{edc}
            |\Delta_{k-1}(n)|\le \lambda_{k-1}n^{\frac{k-2}{k-1}}(\log n)^\frac{(k-2)(k-3)}{2(k-1)}.
        \end{align}
Now, using \eqref{edc} we bound the second term on the right-hand side of \eqref{dx} as follows: 
        \begin{align*}
         \sum_{n> x/U}\frac{|\Delta_{k-1}(n)|}{n(n+1)}&\leq \lambda_{k-1}\sum_{n> x/U}\frac{n^{\frac{k-2}{k-1}}(\log n)^\frac{(k-2)(k-3)}{2(k-1)}}{n(n+1)}\\
          &\leq \lambda_{k-1}\int_{x/U}^{\infty}\frac{t^{\frac{k-2}{k-1}}(\log t)^\frac{(k-2)(k-3)}{2(k-1)}}{t(t+1)}\,\mathrm{d}t\\
          & =\lambda_{k-1}\int_{x/U}^{\infty}\frac{(\log t)^\frac{(k-2)(k-3)}{2(k-1)}}{t^{\frac{1}{k-1}}(t+1)}\,\mathrm{d}t.
          \end{align*}
          The estimated result for this integral is immediate from Lemma \ref{ghj}. Therefore,
          \begin{align}\label{nmu}
             \sum_{n> x/U}\frac{|\Delta_{k-1}(n)|}{n(n+1)}&\leq \frac{(k-1)\lambda_{k-1}U^{\frac{1}{k-1}}(\log (x/U))^\frac{(k-2)(k-3)}{2(k-1)}}{x^{\frac{1}{k-1}}}\left(1+\frac{k^2-3k+4}{2\log(x/U)}\right).
          \end{align}  
        
        Finally, considering the last term of \eqref{dx}, since $N=\lfloor x/
        U\rfloor$, it follows that $N\leq x/U<N+1$. Therefore, it follows that the second term on the right of \eqref{dx} is 
        \begin{align}\label{hhs}
         \frac{|\Delta_{k-1}(N)|}{N+1}
         &\leq \frac{\lambda_{k-1}U^{\frac{1}{k-1}}(\log (x/U))^\frac{(k-2)(k-3)}{2(k-1)}}{x^{\frac{1}{k-1}}}.
        \end{align}
        Summing \eqref{nmu} and \eqref{hhs}, and substituting the result into \eqref{dx} gives:
        \begin{align}\label{bni}
            |\xi(k,x,U)|&\leq \frac{\lambda_{k-1}(k^{3}-4k^{2}+7k-4)U^{\frac{1}{k-1}}(\log (x/U))^\frac{k^{2}-7k+8}{2(k-1)}}{2x^{\frac{1}{k-1}}}\notag\\&\quad+\frac{k\lambda_{k-1}U^{\frac{1}{k-1}}(\log (x/U))^\frac{(k-2)(k-3)}{2(k-1)}}{x^{\frac{1}{k-1}}}.
        \end{align}
        
        Now, considering \eqref{ktu}, we obtain
        \begin{equation}\label{rup}
           M_{*}(k,x,U)=p_k(\log(x/U))+\xi_{*}(k,x,U),
        \end{equation}
        where $\xi_{*}(k,x,U)$ is the error term. 
        With $R_{3}(k,x,U)=\xi(k,x,U)+\xi_{*}(k,x,U)$, substituting \eqref{rup} into \eqref{bdn} yields \eqref{fes}. 
\end{proof}

\begin{lemma}\label{lem:Mstar}
Let $k \ge 3$, $x \ge 2$, $1 \le U \le x$ and $N = x/U$. Then
\[
M_{*}(k,x,U)= p_k(\log(x/U)) + \xi_{*}(k,x,U),
\]
where $M_{*}(k,x,U)$ is as defined in \eqref{ktu}, $p_k$ is a polynomial of degree $k-1$, and the error term satisfies
\[
|\xi_{*}(k,x,U)|\le\frac{(k-1)(k+5)U}{8x}(\log x)^{k-2}.
\] 
\end{lemma}

\begin{proof}
We begin by expressing the first term on the right-hand side of \eqref{ktu} as
\begin{equation*}
   \sum_{n \le N} \frac{P_{k-1}(\log n)}{n+1}\le \sum_{n \le N} \frac{P_{k-1}(\log n)}{n},
\end{equation*}
where the polynomial
\begin{equation*}
    P_{k-1}(\log n)=\sum_{j=0}^{k-2}a_{j}(\log n)^{k-j-2},
\end{equation*}
with $0<a_{j}\le 1$. Since $n \ge e$ implies that $(\log n)^{k-j-2}\le (\log n)^{k-2}$ for all
$0 \le j \le k-2$, and the coefficients $a_j$ are bounded, it follows that
\begin{equation}\label{ggbb}
    P_{k-1}(\log n)\le (k-1)(\log n)^{k-2}.
\end{equation}
Hence, we have
\[
\sum_{n \le N} \frac{P_{k-1}(\log n)}{n}
\le
(k-1)\sum_{n \le N} \frac{(\log n)^{k-2}}{n}.
\]

Applying the Euler--Maclaurin summation formula in \eqref{hhgf}, the sum on the right-hand side is given by
\begin{equation}
   \sum_{n \le N} \frac{(\log n)^{k-2}}{n}\le \frac{(\log N)^{k-1}}{k-1}+\gamma_{k-2}+\frac{f_{k-2}(N)}{2}+\frac{f_{k-2}'(N)}{16}+E_{k-2}(N).
\end{equation}
Hence, we have
\begin{equation*}
   p_{k}(\log N)= (\log N)^{k-1}+(k-1)\gamma_{k-2}
\end{equation*}
and the error term is given by
\begin{equation}\label{ddpp}
    \xi_{*}(k,N)=(k-1)\left(\frac{(\log N)^{k-2}}{2N}+\frac{(\log N)^{k-3}(k-\log N-2)}{16N^{2}}+E_{k-2}(N)\right),
\end{equation}
with
\begin{equation*}
    |E_{k-2}(N)|\le \frac{(\log N)^{k-3}}{16N^{2}}(k+\log N-2),
\end{equation*}
which follows by replacing \(r\) with \(k-2\) in \eqref{dbbs}. For $k\ge3$ and $N=x/U\ge e$, we bound \eqref{ddpp} as follows
\begin{equation}\label{jhf}
    |\xi_{*}(k,x,U)|\le\frac{(k-1)(k+5)U}{8x}(\log x)^{k-2}.
\end{equation}
This completes the proof.
\end{proof}
 
 \section{Proof of Theorem \ref{dfg}}
We extend the approach in Theorem \ref{ric} to $T_{k}(x)$, where understanding similar bounds for higher-order divisor functions becomes essential.
        \begin{proof} For the proof of Theorem \ref{dfg}, we used a simple modification of the proof by induction described in \cite[Theorem 12.1]{MR882550}. 
            
     Using the hyperbola method and the identity $d_k(n)=(1*d_{k-1})(n)$ yields 
        \begin{align}
            T_k(x)&=\sum_{an\le x}d_{k-1}(n)\notag\\
            &=\sum_{a\le U}\sum_{n\le x/a}d_{k-1}(n)+\sum_{U< a\le x}\sum_{n\le x/a}d_{k-1}(n)\notag\\
             &=\sum_{a\le U}T_{k-1}\left(\frac{x}{a}\right)+\sum_{n\le x/U}d_{k-1}(n)\sum_{U< a\le x/n}1\notag\\
             &=\sum_{a\le U}T_{k-1}\left(\frac{x}{a}\right)+x\sum_{n\leq x/U}\frac{d_{k-1}(n)}{n}-U\cdot T_{k-1}\left(\frac{x}{U}\right)+R_1(k,x,U),\label{cvg}
            \end{align}  
            where the error term, $R_{1}(k,x,U)=\vartheta(T_{k-1}(x/U))$, and is bounded above by
             \begin{align*}
            |R_1(k,x,U)|&\leq T_{k-1}(x/U)\\ &\le\frac{x}{U}P_{k-1}(\log(x/U))+\lambda_{k- 1}\left(\frac{x}{U}\right)^\frac{k-2}{k-1}(\log(x/U))^\frac{(k-2)(k-3)}{2(k-1)}.
        \end{align*}
Hence, it follows from \eqref{ggbb} that
\begin{equation}\label{jojo}
   |R_1(k,x,U)| <\frac{(k-1)x}{U}(\log x)^{k-2}+\lambda_{k- 1}\left(\frac{x}{U}\right)^\frac{k-2}{k-1}(\log(x/U))^\frac{(k-2)(k-3)}{2(k-1)}.
\end{equation}

        For the first and second sums on the right-hand side of equation \eqref{cvg}, we apply the results from Lemma \ref{ccx} and Lemma \ref{cmc}, respectively. For both sums, Lemma \ref{ccx} and Lemma \ref{cmc}  provide the main terms and an estimate of the error terms. In the context of the overall expression on the right-hand side of \eqref{cvg}, this enables us to assess and simplify the contributions of both sums.
        
        Lastly, we consider the third term in \eqref{cvg}. By applying \eqref{mlo} to this term, we obtain the following reformulation:
        \begin{align}\label{rfp}
            U\cdot T_{k-1}\left(\frac{x}{U}\right)&=U\left(\frac{xP_{k-1}(\log(x/U))}{U}+R_4(x,U)\right),
             \end{align}
where $R_4(k,x,U)= \Delta_{k-1}(x/U)$. Hence, for all $x/U> 1$ we have
\begin{align}\label{dev}
    |R_4(k,x,U)|&\leq \lambda_{k-1}\left(\frac{x}{U}\right)^\frac{k-2}{k-1}(\log x)^\frac{(k-2)(k-3)}{2(k-1)}.
\end{align}

Substituting \eqref{nd}, \eqref{fes}, and \eqref{rfp} into \eqref{cvg}, we obtain the main term in \eqref{mjj} as required with 
   \begin{align*}
       \Delta_{k}(x,U)&=R_{1}(k,x,U)+R_{2}(k,x,U)+xR_3(k,x,U)-UR_{4}(k,x,U).
   \end{align*}
   Now, the upper bound on $\Delta_k(x, U)$ is given by 
   \begin{align}\label{bbss}
       |\Delta_{k}(x,U)|&< |R_1(k,x,U)|+|R_2(k,x,U)|+x|R_3(k,x,U)|+U|R_4(k,x,U)|\notag\\
       &=\frac{3(k-1)x}{2U}(\log x)^{k-2}+\lambda_{k- 1}\left(\frac{x}{U}(\log(x/U))^\frac{k-3}{2}\right)^\frac{k-2}{k-1}\notag\\
    &\quad+k\lambda_{k-1}U^\frac{1}{k-1}(x(\log x)^{\frac{k-3}{2}})^{\frac{k-2}{k-1}}-(k-1)\lambda_{k-1}
        (x(\log x)^{\frac{k-3}{2}})^{\frac{k-2}{k-1}}\notag\\
    &\quad+\frac{(k-1)(k-2)x}{8U^{2}\log U} (\log x)^{k-2}+\frac{(k-1)(k+5)U}{8}(\log x)^{k-2}\notag\\
    &\quad+ \frac{(k^{3}-4k^{2}+7k-4)\lambda_{k-1}U^{\frac{1}{k-1}}(\log (x/U))^\frac{k^{2}-7k+8}{2(k-1)}}{2x^{\frac{1}{k-1}}}+\frac{k\lambda_{k-1}U^{\frac{1}{k-1}}}{x^{\frac{1}{k-1}}}\notag\cdot\\&\qquad{} (\log (x/U))^\frac{(k-2)(k-3)}{2(k-1)}.
       \end{align}
    
    Since $x>U>1$, the first and third terms on the right-hand side of \eqref{bbss} are asymptotically larger than the remaining terms.  Hence, we set
    \begin{align}\label{bvn}
        U&=cx^\frac{1}{k}(\log x)^\frac{(k-2)(k+1)}{2k},
    \end{align}
    where $c$ is a constant. Optimizing $U$ over $c$ and simplifying gives $c$ as in \eqref{ttr}.
     Also, we have 
   \begin{align*}
       \log\left(\frac{x}{U}\right)&=\log x\left(\frac{k-1}{k}-\frac{1}{\log x_{0}}\left(\log c+\frac{(k-2)(k+1)\log(\log x_{0})}{2k}\right)\right)\\
       &\le \log x,
   \end{align*}
for all, $k\ge 3$, $x\ge x_{0}$ and $0<c< 2$.

    Substituting \eqref{bvn} into \eqref{bbss} and simplifying yields 
\begin{align*}
|\Delta_{k}(x)|&< \left(\frac{3(k-1)}{2c}+k\lambda_{k-1}c^{\frac{1}{k-1}}\right)x^{\frac{k-1}{k}}(\log x)^{\frac{(k-2)(k-1)}{2k}}+\frac{(k-1)(k+5)c\,x^{1/k}}{8}\cdot\notag\\&\quad(\log x)^{\frac{(k-2)(3k+1)}{2k}}+\left(\frac{(k-1)(k-2)}{8c^{2}}+\frac{\lambda_{k-1}}{c^{\frac{k-2}{k-1}}}\right)\left(\frac{x}{\log x}\right)^{\frac{k-2}{k}}\notag\\&\quad+\lambda_{k-1}c^{\frac{1}{k-1}}\left(\frac{k^{3}}{2}+k\log x\right)\frac{(\log x)^{\frac{k^{2}-5k+2}{2k}}}{x^{1/k}}\\
&<\lambda_{k} x^{\frac{k-1}{k}}(\log x)^{\frac{(k-2)(k-1)}{2k}},
\end{align*}
where for all $x\ge x_{0}$, we have $\lambda_{k}$ and $c$ as in \eqref{cxz} and \eqref{ttr} respectively.
    
Finally, we obtain the results for $\lambda_{k}$ with $k=3$ in Remark~\ref{dmf}, and for $k=4, 5,$ and $6 $ in Table~\ref{lambda}, valid for all $x\ge x_{0}$.
\end{proof}

\section{Conclusion and Future Work}
Here, we briefly describe two theoretical approaches that may have contributed to the worse constant for $\lambda_{k}$ in Theorem \ref{dfg}. It is worth noting that the choice of the optimal constant in \eqref{abn} and \eqref{ttr} affects the resulting estimate. However, these constants were not arbitrarily chosen. For instance, a simple choice of $A=1$ in \eqref{abn} gives a better estimate for $\Delta_{3}(x)$ compared with $c=1.852$ in \eqref{ttr}. Also, to obtain a sharper bound for $\lambda_{k}$, one may consider an improvement in $\lambda_{k-1}$.  In contrast to \eqref{tk}, which was used in Theorem \ref{dfg}, we applied a better estimate for $\Delta(x)$ in Theorem \ref{ric}, as shown in \eqref{amo}, which holds for $x$ sufficiently large. Other estimates in the same range of $x$ necessitated this estimate. 

In this paper, we have improved upon the logarithmic exponent in the error term for $T_k(x)$ by using an elementary approach, demonstrating that meaningful refinement could be achieved without recourse to analytic techniques. However, we are excited to build upon this result using analytic methods,  with the aim of obtaining an explicit estimate for $\Delta_{k}(x)$ with an improvement in the exponent of the $x$ factor (see \cite[Theorems 12.2 and 12.3]{MR882550}). 
\section*{Acknowledgements}
The author would like to thank his supervisor, Timothy S. Trudgian, for his  assistance throughout this project. The author also acknowledges Bryce Kerr, Neea Paloj{\"a}rvi, Olivier Bordell\`es, Ali Ebadi,  
Michaela Cully-Hugill, and Hongliang Wang for their valuable discussions. 

\printbibliography
\end{document}